\theoremstyle{plain}
\newtheorem{thm}{Theorem}[section]
\newtheorem{lem}[thm]{Lemma}
\theoremstyle{definition}
\newtheorem{defi}[thm]{Definition}
\theoremstyle{remark}
\newtheorem{rem}[thm]{Remark}
\numberwithin{equation}{section}
\newcommand{\A}{\mathcal{A}}
\newcommand{\haut}{\operatorname{ht}}
\newcommand{\lgw}{\longrightarrow}
\newcommand{\lgm}{\longmapsto}
\newcommand{\AAA}{\mathbb A}
\newcommand{\ovl}{\overline}
\newcommand{\Id}{\operatorname{Id}}
\newcommand{\ord}{\operatorname{ord}}
\renewcommand{\Im}{\operatorname{Im}}
\newcommand{\wdt}{\widetilde}
\newcommand{\id}{\operatorname{id}}
\renewcommand{\k}{\Bbbk}
\newcommand{\K}{\mathbb{K}}
\newcommand{\I}{\mathcal I}
\newcommand{\N}{\mathbb{N}}
\newcommand{\Q}{\mathbb{Q}}
\newcommand{\h}{\text{H}}
\newcommand{\Deg}{\text{Deg}}
\newcommand{\lag}{\langle}
\newcommand{\rag}{\rangle}
\renewcommand{\phi}{\varphi}
\renewcommand{\P}{\mathbb{P}}
\begin{document}
%
%%%%%
\title{Finiteness results concerning algebraic power series}
\author{Fuensanta Aroca}
\email{fuen@im.unam.mx}
\address{Instituto de Matem\'aticas, Universidad Nacional Aut\'onoma de M\'exico (UNAM), Mexico}

\author{Julie Decaup}
\email{julie.decaup@im.unam.mx}
\address{Instituto de Matem\'aticas, Universidad Nacional Aut\'onoma de M\'exico (UNAM), Mexico}

\author{Guillaume Rond}
\email{guillaume.rond@univ-amu.fr}
\address{Universit\'e publique, France}

\subjclass[2010]{11D88, 11G50,  13F25, 14G05}

\keywords{algebraic power series rings, constructible set}
\thanks{The third author is deeply grateful to the UMI LASOL of  CNRS where this project has been partially carried out.}

\begin{abstract}
We construct an explicit filtration of the ring of algebraic power series by finite dimensional constructible sets, measuring the complexity of these series. As an application, we give a bound on the dimension of the set of algebraic power series  of bounded complexity
lying on an algebraic variety defined over the field of power series.
\end{abstract}

\maketitle
%\tableofcontents
%%%%%%%%%%%%%%%%%%%%%%%%%%%%%%%%%%%%%%
The ring of polynomials over a field $\k$ is a $\k$-vector space filtered by finite dimensional vector spaces, namely the $\k$-vector spaces of polynomials of degree less than $d$, for every $d\in\N$. In many situations, the description of the ring of polynomials as a ring filtered by finite dimensional vector spaces is very useful, both for theoretical and applied aspects. The main advantage of this, is that the set of polynomials of degree bounded by $d$ is fully described by a finite number of data. One common use of this fact is when one tries to approximate objects by polynomials. For example, this is the case in analysis, by the use of Weierstrass approximation theorem, or in algebra when one deals with approximations by polynomials of solutions of differential or functional equations, or transcendental estimations of such solutions.\\
\\
In several cases, dealing with polynomials is not practical or effective enough, in particular because the implicit function theorem does not hold in the polynomial setting. To avoid this problem, one can replace the ring of polynomials by the ring of algebraic power series over $\k$. This is the set of formal power series that are algebraic over the ring of polynomials. This set is  a ring (satisfying the implicit function theorem), in particular it is a $\k$-vector space, but there is no explicit or effective description of a filtration of this ring by finite dimensional  spaces.\\
\\
In this note, we give an explicit description of a filtration of the ring of algebraic power series by constructible sets. A constructible set is a subset of an affine space $\k^n$ which is described by polynomial equalities and inequalities. Here, the sets of this filtration are the sets $\A(d,h)$ of algebraic power series whose minimal polynomial $P(x,T)$ over $\k[x]$, is such that
$$\deg_x(P)\leq h,\ \deg_T(P)\leq d.$$
We prove that these sets can be seen as  constructible subsets of $\k^{N(d,h)}$ for some constant $N(d,h)$ depending only on $d$ and $h$ (see Corollary \ref{cor}), and the dimension of this constructible set is computed. Roughly speaking, the idea is to try to identify an algebraic power series with its minimal polynomial. But this cannot work directly, since two distinct algebraic power series may have the same minimal polynomial. Moreover, an irreducible polynomial in $x$ and $T$ has no  power series root in general. Therefore there is no correspondence between algebraic power series and irreducible polynomials. To overcome this problem, we first describe  the subset of $\A(d,h)$ of algebraic power series whose minimal polynomial satisfies the implicit function theorem as a constructible subset of some $\k^N$ (see Theorem \ref{Idh constructible}). And from this we deduce the same kind of result for $\A(d,h)$. Let us mention that this kind of approach has already been used in \cite{F} and \cite{HM}.\\
\\
As an application, we give a bound on the dimension of the set of solutions of polynomial equations with coefficients in $\k(\!(x)\!)$ whose entries are in $\A(d,h)$, when $\k$ is algebraically closed (see Theorem \ref{application}). This problem is a non-archimedean analogue of the problem of bounding the number of $\ovl\Q$-points of a complex algebraic variety.\\

\noindent \textbf{Acknowledgment.} 
We are very grateful to Michel Hickel and Micka\"el Matusinski for their comments and suggestions on a previous version of our paper. In particular they pointed out that there was an essential gap in the proof of Theorem \ref{cor}.

\section{Preliminaries}
In the whole paper $\k$ will always denote a field, and $x$ will denote a single indeterminate. The ring of algebraic power series will be denoted by $\k\lag x\rag$. For every integer $n$, $\mathbb A_\k^n$ will denote the affine space of dimension $n$ over $\k$.

\begin{defi}
Let $f\in\k\lag x\rag$. The morphism $\psi:\k[x,T]\lgw \k\lag x\rag$ defined by  $\psi(P(x,T))=P(x,f)$ is not injective and its kernel is a height one prime ideal of $\k[x,T]$. Therefore it is generated by one polynomial. If $P(x,T)$ is  such a generator, any other generator of this ideal is a multiple of $P(x,T)$ by a non-zero element of $\k$. Any such a generator is called a \emph{minimal polynomial} of $f$.  But, by abuse of language, we will always refer to such an element by \emph{the} minimal polynomial of $f$.
\end{defi}

\begin{defi}\label{height}\cite{AB}\cite{Ro}
Let $P(T)\in\k[x,T]$.  The maximum of the degrees of the coefficients of $P(T)$ seen as a polynomial in $T$ is called the \emph{height} of $P$.\\
For $f\in\k\lag x\rag$,  the height of its minimal polynomial is called the \emph{height} of $f$, and is denoted by $\h(f)$. The \emph{degree} of $f$ is the degree of the field extension $\k(x)\lgw \k(x,f)$ or, equivalently, the degree of its minimal polynomial seen as a polynomial in $T$. It is denoted by $\Deg(f)$.
\end{defi}

\begin{defi} We define the following sets:
\begin{itemize}
\item $\A(d,h)$ denotes the set of algebraic power series of degree $\leq d$ and height $\leq h$.
\item $\A(d,h)_0$ denotes the set of algebraic series of $\A(d,h)$ that vanish at 0.
\item $\I(d,h)$ denotes the set of algebraic power series of $\A(d,h)_0$   whose minimal polynomial satisfies the implicit function theorem.
\end{itemize}
That is, $f\in\I(d,h)$, if and only if its minimal polynomial $P(x,T)$ satisfies
$$P(0,0)=0\text{ and }\frac{\partial P}{\partial T}(0,0)\neq 0.$$
In particular we have $\I(d,h)\subset \A(d,h)_0\subset \A(d,h)$ for every $d$, $h$.
\end{defi}

\begin{rem}\label{glob}
It is straightforward to check that $\A(d,h)=\mathbb A_\k\times\A(d,h)_0$.
\end{rem}

\begin{rem}\label{min_poly}
Let $f$ be an algebraic power series with $f(0)=0$ and assume that there is $P\in\k[x,T]$ such that $P(0,0)=0$, $\frac{\partial P}{\partial T}(0,0)\neq 0$ and $P(x,f)=0$. Then the minimal polynomial of $f$ satisfies the implicit function theorem. Indeed such a minimal polynomial is denoted by $Q$ and should divide $P$: $P=QR$ for some polynomial $R$. Then
$$ \frac{\partial P}{\partial T}= \frac{\partial Q}{\partial T}R+ Q\frac{\partial R}{\partial T}.$$
Since $f(0)=0$ and $Q(x,f(x))=0$ then $Q(0,0)=0$. Hence $\left(\frac{\partial Q}{\partial T}R\right)(0,0)\neq 0$ and $\frac{\partial Q}{\partial T}(0,0)\neq 0$.
\end{rem}

%%%%%%%%%%%%%%%%%%%%%%%%%%%%%%%%%%%%%%%
%%%%%%%%%%%%%%%%%%%%%%%%%%%%%%%%%%%%%%%%%%%%%
\section{Filtration of the ring of algebraic series by constructible sets}
\begin{defi}
Let $\k$ be a field and let $d$, $h$, $e\in\N$. We define
$\A(d,h,e)$ to be the subset of algebraic power series $f\in\A(d,h)_0$ such that if $P$ denotes the minimal polynomial of $f$,  we have
$$\ord\left(\frac{\partial P}{\partial T}(x,f)\right)= e.$$
\end{defi}
So we have $\I(d,h)=\A(d,h,0)$ and  $\A(d,h)_0=\cup_{e\in\N}\A(d,h,e)$. Indeed in positive characteristic the minimal polynomial of an algebraic power series is separable since $\k\langle x\rangle$ is a separable extension of $\k[x]$. This comes from the fact that $\k\langle x\rangle$ is the henselization of the local ring $\k[x]_{(x)}$. Moreover this union is finite by the following lemma:

\begin{lem}\label{bound_der}\cite[Lemma 3]{HM}
If $e> 2dh$ then $\A(d,h,e)=\emptyset$.
\end{lem}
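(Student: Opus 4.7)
The plan is to control $e$ via the resultant of $P$ and $\partial P/\partial T$ in the variable $T$, where $P$ denotes the minimal polynomial of a hypothetical $f \in \A(d,h,e)$. Since $P$ is irreducible and separable as a polynomial in $T$ (automatic in characteristic zero, and in positive characteristic a consequence of the separability of $\k\lag x\rag$ over $\k[x]$ recalled in the paragraph preceding the lemma), the polynomials $P$ and $\partial P/\partial T$ are coprime in $\k(x)[T]$. Hence the resultant
\[ R(x) := \mathrm{Res}_T\!\bigl(P,\, \tfrac{\partial P}{\partial T}\bigr) \in \k[x] \]
is a \emph{nonzero} polynomial. I expect this nonvanishing to be the only delicate point of the argument, and it is precisely where separability is used.

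Next, I would invoke the standard Bezout-type identity for resultants: there exist $A, B \in \k[x,T]$ such that $A \cdot P + B \cdot \partial P/\partial T = R$ in $\k[x,T]$. Substituting $T = f(x)$ kills the first summand since $P(x,f)=0$, leaving
\[ R(x) \;=\; B(x,f(x)) \cdot \tfrac{\partial P}{\partial T}(x,f(x)). \]
From this, the order of $R$ as a power series in $x$ satisfies $\ord_x R \geq \ord_x \tfrac{\partial P}{\partial T}(x,f(x)) = e$.

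Finally, I would bound $\deg_x R$ from above by writing $R$ as the determinant of the Sylvester matrix of $P$ and $\partial P/\partial T$ with respect to $T$. This matrix has size at most $(2d-1)\times(2d-1)$, and each of its entries is a coefficient of $P$ or $\partial P/\partial T$ in $T$, hence a polynomial in $x$ of degree at most $h$. Therefore $\deg_x R \leq (2d-1)h$, and since $R \neq 0$ we have $\ord_x R \leq \deg_x R$. Combining the two bounds yields
\[ e \;\leq\; \ord_x R \;\leq\; \deg_x R \;\leq\; (2d-1)h \;<\; 2dh, \]
which contradicts the assumption $e > 2dh$. Hence $\A(d,h,e) = \emptyset$ in that range.
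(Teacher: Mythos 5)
Your proof is correct. The paper gives no argument of its own here, it simply cites \cite[Lemma 3]{HM}; your resultant argument (separability of the minimal polynomial gives $\mathrm{Res}_T(P,\partial P/\partial T)\neq 0$, the Bezout identity gives $e\leq\ord_x \mathrm{Res}$, the Sylvester determinant gives $\deg_x \mathrm{Res}\leq (2d-1)h$) is the standard proof of that cited lemma, and it even yields the slightly sharper conclusion that $\A(d,h,e)=\emptyset$ whenever $e>(2d-1)h$.
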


%%%%%%%%%%%%%%%%%%%%%%%%%%%%%%%%%%%%%%%%

\begin{lem}\label{truncation}
We have an injective map
$$\phi_{d,h,e}:\A(d,h,e)\lgw  \mathbb A_\k^{e}\times\I(d,h+e(d-2))$$
defined as follows:
let $f\in\A(d,h,e)$ and let us write 
$$f=f^{(0)}+x^{e}f^{(1)}$$
where $f^{(0)}$ is a polynomial in $x$ of degree $\leq e$ and vanishing at 0, and $f^{(1)}$ is an algebraic power series vanishing at 0. Then $f^{(1)}\in \I(d,h+e(d-2))$ and $f^{(0)}\in\mathbb A_\k^{e}$ by identifying the set of polynomials of degree $\leq e$ vanishing at 0 with $\mathbb A_\k^{e}$. Therefore we define 
$$\phi_{d,h,e}(f):=(f^{(0)}, f^{(1)}).$$
\end{lem}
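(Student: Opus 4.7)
First, I would check that the decomposition $f = f^{(0)} + x^e f^{(1)}$ is unique: $f^{(0)}$ must be the truncation of $f$ at order $e$ (since $x^e f^{(1)}$ has order $\ge e+1$), and then $f^{(1)}=(f-f^{(0)})/x^e$ automatically vanishes at $0$. This makes $\phi_{d,h,e}$ well defined and trivially injective, because $f$ is recovered as $f^{(0)}+x^e f^{(1)}$. The main content is to verify that $f^{(1)}\in \I(d,h+e(d-2))$.

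To that end, let $P(x,T)$ be the minimal polynomial of $f$ and introduce
\[
Q(x,T) := P(x,\, f^{(0)}(x) + x^e T) \in \k[x,T].
\]
Then $Q(x,f^{(1)}(x)) = P(x,f(x)) = 0$, and the binomial expansion of $Q$ gives $\deg_T Q\le d$ and $\deg_x Q \le h + de$. The key step will be to prove $x^{2e}\mid Q$ in $\k[x,T]$. Writing the Hasse--Schmidt expansion
\[
Q(x,T) = \sum_{k\ge 0} P^{[k]}_T(x, f^{(0)}(x))\, x^{ek}\, T^k,
\]
the factor $x^{ek}$ immediately gives $x^{2e}\mid c_k$ for $k\ge 2$. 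For $k=0,1$ I plan to substitute $f=f^{(0)}+x^e f^{(1)}$ into the identities $P(x,f)=0$ and $P_T(x,f)=u(x)x^e$ (with $u(0)\ne 0$, which is the defining hypothesis of $\A(d,h,e)$) via the Hasse--Schmidt Taylor expansion around $f^{(0)}$. Using $f^{(1)}(0)=0$ and keeping only the lowest order terms should yield
\[
P_T(x, f^{(0)}) = u(x)x^e + O(x^{e+1}), \qquad P(x, f^{(0)}) = O(x^{2e+1}),
\]
whence $x^{2e}\mid c_1 = x^e P_T(x,f^{(0)})$ and $x^{2e}\mid c_0 = P(x,f^{(0)})$. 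So $\tilde Q := Q/x^{2e} \in \k[x,T]$ still vanishes at $(x, f^{(1)}(x))$ and has $\deg_x \tilde Q\le h + e(d-2)$ and $\deg_T \tilde Q\le d$.

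These same estimates then give $(c_0/x^{2e})(0)=0$ and $(c_1/x^{2e})(0)=u(0)\ne 0$, so $\tilde Q(0,0)=0$ and $\partial_T\tilde Q(0,0)=u(0)\ne 0$. Thus $\tilde Q$ satisfies the implicit function theorem at $(0,0)$; combined with $\tilde Q(x,f^{(1)}(x))=0$ and $f^{(1)}(0)=0$, Remark~\ref{min_poly} will give that the (irreducible) minimal polynomial $R$ of $f^{(1)}$ also satisfies it. Since $R\mid\tilde Q$ in the UFD $\k[x,T]$, additivity of the partial degrees in $x$ and in $T$ under multiplication will yield $\deg_T R\le d$ and $\deg_x R\le h+e(d-2)$, so $f^{(1)}\in\I(d,h+e(d-2))$. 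The hardest part will be the careful bookkeeping of vanishing orders at $x=0$ that simultaneously produces $x^{2e}\mid Q$ and the nonvanishing $\partial_T\tilde Q(0,0)=u(0)$; the use of Hasse--Schmidt rather than ordinary Taylor derivatives is what should make the argument survive in positive characteristic.
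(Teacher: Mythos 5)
Your proposal is correct and follows essentially the same route as the paper: substitute $T\mapsto f^{(0)}+x^eT$ into the minimal polynomial of $f$, show the result is divisible by $x^{2e}$, bound the degrees of the coefficients by $h+e(d-2)$, and check the implicit function theorem condition at $(0,0)$ using $\ord\bigl(\frac{\partial P}{\partial T}(x,f)\bigr)=e$. Your write-up is in fact a bit more explicit than the paper's at two points (establishing $\ord P(x,f^{(0)})\geq 2e+1$ directly rather than via $R(f^{(1)})=0$, and invoking Remark \ref{min_poly} together with degree additivity to pass to the minimal polynomial of $f^{(1)}$), but these are refinements of the same argument.
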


\begin{proof}
let $f\in\A(d,h,e)$ and let
$$P=a_dT^d+\cdots+a_1T+a_0$$
be its minimal polynomial. We can write
$$f=f^{(0)}+x^{e}f^{(1)}$$
where $f^{(0)}$ is a polynomial in $x$ of degree $\leq e$ and $f^{(1)}$ is an algebraic power series vanishing at 0.\\
We have that
$$P(f^{(0)}+Z)=P(f^{(0)})+\frac{\partial P}{\partial T}(f^{(0)})Z+Q(x,Z)$$
where $Q(x,Z)$ is a polynomial in $x$ and $Z$, divisible by $Z^2$. Moreover if we write
$$P(f^{(0)}+Z)=b_dZ^d+\cdots+b_0$$
we have that
$$\deg(b_i)\leq h+e(d-i).$$
Let us set 
$$R(T):=\frac{P(f^{(0)}+x^{e}T)}{x^{2e}}=\frac{P(f^{(0)})}{x^{2e}}+\frac{1}{x^{e}}\frac{\partial P}{\partial T}(f^{(0)})T+\frac{Q(x,x^{e}T)}{x^{2e}}.$$
We have $\frac{Q(x,x^{e}T)}{x^{2e}}\in\k[x,T]$ since $Q(x,T)$ is divisible by $T^2$. Moreover since $f^{(0)}-f\in (x^{e+1})$ we have $\frac{\partial P}{\partial T}(f^{(0)})-\frac{\partial P}{\partial T}(f)\in (x^{e+1})$, and because $\frac{\partial P}{\partial T}(f)$ is of order $e$ then $\frac{\partial P}{\partial T}(f^{(0)})$ has order $e$. Finally $R(f^{(1)})=0$ so $\frac{P(f^{(0)})}{x^{2e}}\in\k[x]$. This proves that $R(T)\in\k[x,T]$.\\
We have that
$$R(T)=c_dT^d+\cdots+c_0$$
with
$$\deg(c_i)=\deg(b_i)+ie-2e\leq h+e(d-2)$$
and $\frac{\partial R}{\partial T}(0,0)=c_1(0)\neq 0$. So $f^{(1)}$ is the only power series solution of $R=0$ vanishing at 0 by the implicit function theorem and
$$f^{(1)}\in \I(d,h+e(d-2)).$$
This construction gives an injective map
$$\A(d,h,e)\lgw \mathbb A_\k^{e}\times \I(d,h+e(d-2))$$
as claimed.
\end{proof}
%%%%%%%%%%%%%%%%%%%%%%%%%%%%%%%%
This proves that 
$$\k\langle x\rangle =\bigcup_{e,d,h}\mathbb A_\k^{e}\times\I(d,h)$$
where $\mathbb A_\k^{e}$ is identified with polynomials in $x$ of degree $\leq e$ vanishing at 0 and the inclusion $\mathbb A_\k^{e}\times\I(d,h)\subset\k\langle x\rangle$ is given by
$$(f,g)\lgm f+x^eg.$$

\begin{lem}
We have  $\mathbb A_\k^{e}+\I(d,h)\subset \A(d,h+ed)$ for all integers $d$, $h$, $e$.
\end{lem}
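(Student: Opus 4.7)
The plan is to exhibit, for each $f \in \mathbb A_\k^e$ and $g \in \I(d,h)$, an explicit annihilating polynomial for $f+g$ in $\k[x,T]$ whose $T$-degree and height are under control, and then transfer these bounds to the minimal polynomial of $f+g$ by divisibility.

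First I would let $P(x,T) = \sum_{i=0}^d a_i(x) T^i \in \k[x,T]$ be the minimal polynomial of $g$, so that $\deg_T P \le d$ and $\deg a_i \le h$ for all $i$. Translating the $T$-variable by $f(x)$, I set
\[
Q(x,T) := P\bigl(x,\, T - f(x)\bigr) \in \k[x,T].
\]
This $Q$ is nonzero (the substitution $T \mapsto T - f(x)$ is an automorphism of $\k[x,T]$), and by construction $Q(x, f+g) = P(x, g) = 0$. Hence $Q$ lies in the kernel of the evaluation map $\k[x,T] \to \k\langle x\rangle$ sending $T \mapsto f+g$, so the minimal polynomial $M$ of $f+g$ divides $Q$ in $\k[x,T]$.

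Next I would bound the invariants of $Q$. Clearly $\deg_T Q = \deg_T P \le d$. For the height, expanding $(T-f(x))^i$ by the binomial theorem gives
\[
Q(x,T) = \sum_{j=0}^d T^j \sum_{i \ge j} \binom{i}{j} a_i(x) \bigl(-f(x)\bigr)^{i-j},
\]
and since $\deg f \le e$ and $\deg a_i \le h$, the coefficient of $T^j$ has $x$-degree at most $h + (d-j)e$. Taking the maximum over $j$ yields $\h(Q) \le h + ed$.

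The remaining step is to descend these bounds from $Q$ to $M$. For this I would observe that the height, as defined, coincides with the total $x$-degree of a polynomial viewed in $\k[T][x]$. Since $\k[T]$ is an integral domain, $\deg_x$ is additive under multiplication in $\k[T][x]$: the leading $x$-parts lie in $\k[T]\setminus\{0\}$ and cannot cancel. The analogous statement for $\deg_T$ is obvious. Writing $Q = M\cdot R$, these additivity properties then give $\deg_T M \le \deg_T Q \le d$ and $\h(M) \le \h(Q) \le h + ed$, which is exactly $f+g \in \A(d, h+ed)$.

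The main subtlety is the last step: a priori, passing to a factor could inflate the height, and one must rule this out. The integrality argument above is the essential input; once it is in place, the rest of the proof is a clean binomial estimate on the translated polynomial $Q$.
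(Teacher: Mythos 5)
Your argument is internally correct, but it proves a different statement from the one the paper intends. In the paper's conventions, set up in the paragraph immediately preceding the lemma, $\mathbb A_\k^{e}$ is identified with polynomials of degree $\leq e$ vanishing at $0$ and the inclusion of $\mathbb A_\k^{e}\times\I(d,h)$ into $\k\langle x\rangle$ is $(f^{(0)},f^{(1)})\mapsto f^{(0)}+x^{e}f^{(1)}$; accordingly $\mathbb A_\k^{e}+\I(d,h)$ denotes the set of series $f^{(0)}+x^{e}f^{(1)}$, not the plain sum $f^{(0)}+f^{(1)}$. This twisted version is also what is actually needed: the lemma is the converse estimate to Lemma \ref{truncation}, which decomposes $f\in\A(d,h,e)$ as $f^{(0)}+x^{e}f^{(1)}$, and the same substitution reappears in the proof of Theorem \ref{cor}. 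Your annihilator $Q(x,T)=P(x,T-f(x))$ only handles $f+g$, so it yields the un-twisted (true, but different) inclusion and says nothing about the minimal polynomial of $f^{(0)}+x^{e}f^{(1)}$, where one must also absorb the factor $x^{e}$.

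The repair uses exactly your strategy with a different substitution, and it is what the paper does: letting $R$ be the minimal polynomial of $f^{(1)}\in\I(d,h)$, of $T$-degree $d'\leq d$ and height $\leq h$, set $P(T):=x^{ed'}R\left(\frac{T-f^{(0)}}{x^{e}}\right)\in\k[x,T]$; then $P\bigl(f^{(0)}+x^{e}f^{(1)}\bigr)=0$, and the binomial expansion shows the coefficient of $T^{i}$ has $x$-degree at most $\max_{j\geq i}\bigl\{h+e(d'-j)+e(j-i)\bigr\}=h+e(d'-i)\leq h+ed$. Your final descent step --- height equals $\deg_x$ when the polynomial is viewed in $\k[T][x]$, and $\deg_x$ is additive on products, so the bounds pass from the annihilator to the minimal polynomial --- is correct and is in fact needed in the twisted setting as well, since $P$ need not be irreducible (the paper states the conclusion without spelling this out); that part of your write-up transfers verbatim.
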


\begin{proof}
Let $e\in \N$ and let us consider a polynomial $R(Z)\in\k[x,Z]$, of degree $d'\leq d$, such that
$\frac{\partial R}{\partial Z}(0,0)\neq 0$. Assume that
$\deg_xR\leq h$.  By the implicit function theorem, $R=0$ has a unique algebraic power series solution denoted by $f^{(1)}$. Now let $f^{(0)}\in\k[x]$ be a polynomial of degree $\leq e$ vanishing at 0. We set
$$P(T)=x^{ed'}R\left(\frac{T-f^{(0)}}{x^e}\right).$$
Then $P(f^{(0)}+x^ef^{(1)})=0$. Moreover if
$$R(T)=c_{d'}T^{d'}+\cdots+c_0$$
then 
$$P(T)=c_{d'}(T-f^{(0)})^{d'}+c_{d'-1}x^e(T-f^{(0)})^{d'-1}+\cdots+c_0x^{d'e}=a_{d'}T^{d'}+\cdots+a_0$$
where 
$$\deg(a_i)\leq \max_{j\geq i}\left\{\deg(c_{j})+e(d'-j)+\deg(f^{(0)})(j-i)\right\}\leq h+e(d'-i).$$
So $f:=f^{(0)}+x^ef^{(1)}\in \A(d',h+ed')\subset \A(d,h+ed)$.
\end{proof}

%%%%%%%%%%%%%%%%%%%%%%%%%%%%%%%%%%%%%%%

\begin{lem}\label{univ}
Let $\k$ be a field. 
Let $f\in \I(d,h)$ and let $P=\sum_{i\leq h, j\leq d}a_{i,j}x^i T^j$ be a polynomial satisfying the implicit function theorem and vanishing at $f$. Let us consider 
$$P_{d,h}:=\sum_{i\leq h, j\leq d}A_{i,j}x^i T^j$$ where the $A_{i,j}$ are new indeterminates and $A_{0,0}$ is assumed to be 0. Then $P_{d,h}$ has a unique power series solution
$$f_{d,h}\in (x)\k\left\langle \frac{A_{i,j}}{A_{0,1}}, x\right\rangle$$
where $(i,j)$ runs over $\{0,\ldots, d\}\times\{0,\ldots, h\}\backslash \{(0,0), (0,1)\}$. Moreover, the coefficients of the $x^k$ in the expansion of $f_{d,h}(x)$ are in $\k\left[\frac{A_{i,j}}{A_{0,1}}\right]$, and we have 
$$f_{d,h}\left(\frac{a_{i,j}}{a_{0,1}},x\right)=f(x).$$
\end{lem}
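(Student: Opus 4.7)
The plan is to reduce the statement to a direct application of the implicit function theorem, applied this time to a \emph{universal} polynomial. Since $P$ satisfies the implicit function theorem, we have $a_{0,0}=0$ and $a_{0,1}\neq 0$; symmetrically, the universal polynomial $P_{d,h}$ has $A_{0,0}=0$ by hypothesis, so it is natural to normalize by $A_{0,1}$. Working in $\k[A_{i,j}/A_{0,1}][x,T]$ (where $(i,j)$ ranges over pairs $\neq(0,1)$), divide $P_{d,h}$ by $A_{0,1}$ to obtain
$$\tilde P_{d,h}(x,T)=T+\sum_{(i,j)\notin\{(0,0),(0,1)\}}\frac{A_{i,j}}{A_{0,1}}\,x^iT^j.$$
This polynomial vanishes at $(0,0)$ and its derivative in $T$ at the origin is $1$, so it satisfies the implicit function theorem over the regular ring $\k[A_{i,j}/A_{0,1}]$. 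Hence it admits a unique solution $f_{d,h}\in(x)\k\lag A_{i,j}/A_{0,1},x\rag$.

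Next, I would show that the coefficients of $x^k$ in $f_{d,h}$ are \emph{polynomials} in the variables $A_{i,j}/A_{0,1}$. Writing $f_{d,h}=\sum_{k\geq 1}c_kx^k$ and substituting into $\tilde P_{d,h}(x,f_{d,h})=0$, the coefficient of $x^n$ is an equation of the form
$$c_n+G_n\bigl(c_1,\ldots,c_{n-1}\bigr)=0,$$
where $G_n$ is a polynomial in its entries with coefficients in $\k[A_{i,j}/A_{0,1}]$. The crucial point is that thanks to the linear term $T$ in $\tilde P_{d,h}$, the coefficient of $c_n$ in this equation is exactly $1$, so no further inversion is needed. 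An induction on $n$ then yields $c_n\in\k[A_{i,j}/A_{0,1}]$.

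Finally, the identity $f_{d,h}(a_{i,j}/a_{0,1},x)=f(x)$ will follow from uniqueness. Specializing $A_{i,j}/A_{0,1}\mapsto a_{i,j}/a_{0,1}$ transforms $\tilde P_{d,h}$ into $P/a_{0,1}$, and since $c_n\in\k[A_{i,j}/A_{0,1}]$, the specialization of $f_{d,h}$ is a well-defined power series in $\k\lag x\rag$ vanishing at $0$. It satisfies $P(x,f_{d,h}(a_{i,j}/a_{0,1},x))=0$, so by uniqueness of the power series root of $P$ vanishing at $0$ (guaranteed by the fact that $P$ satisfies the implicit function theorem), it must equal $f(x)$.

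The main obstacle I anticipate is precisely the intermediate claim that the $c_n$ lie in $\k[A_{i,j}/A_{0,1}]$ rather than in some larger ring such as $\k\lag A_{i,j}/A_{0,1}\rag$; the argument above bypasses this by exploiting the linear normalization, but this is the point where one must resist the temptation to apply the implicit function theorem in a way that would produce algebraic, rather than polynomial, coefficients.
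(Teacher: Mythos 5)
Your proof is correct and follows essentially the same route as the paper: normalize by $A_{0,1}$, obtain existence and uniqueness from the implicit function theorem, show the coefficients of the $x^k$ in $f_{d,h}$ lie in $\k\left[\frac{A_{i,j}}{A_{0,1}}\right]$, and then conclude $f_{d,h}\left(\frac{a_{i,j}}{a_{0,1}},x\right)=f(x)$ by specialization and uniqueness of the root vanishing at $x=0$. The only cosmetic difference is that the paper obtains the polynomiality of the coefficients by invoking Hensel's lemma in the $(x)$-adic completion $\k\left[\frac{A_{i,j}}{A_{0,1}}\right][[x]]$, whereas you re-derive the same fact by the explicit coefficient recursion made available by the unit linear term after dividing by $A_{0,1}$.
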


\begin{proof}
The existence and the unicity of $f_{d,h}$ comes from the implicit function theorem.\\
First we prove that $f_{d,h}\in\k\left[\frac{A_{i,j}}{A_{0,1}}\right][[ x]]$. Indeed we have that 
$$P(d,h)\left(\frac{A_{i,j}}{A_{0,1}}, x,0\right)\in (x)\text{ and }\frac{\partial P}{\partial T}\left(\frac{A_{i,j}}{A_{0,1}}, x,0\right)\notin (x).$$
 So by Hensel Lemma the root $f_{d,h}$ belongs to the completion of $\k\left[\frac{A_{i,j}}{A_{0,1}},x\right]$ with respect to the ideal $(x)$, i.e. $f_{d,h}\in\k\left[\frac{A_{i,j}}{A_{0,1}}\right][[ x]]$. This implies that 
$f_{d,h}\left(\frac{a_{i,j}}{a_{0,1}},x\right)$ is well defined.\\
Finally we conclude that $f_{d,h}\left(\frac{a_{i,j}}{a_{0,1}},x\right)=f(x)$ since $f_{d,h}$ is the unique solution of $P_{d,h}=0$ vanishing at $x=0$ and $f$ is the unique solution of $P=0$ vanishing at $x=0$.
\end{proof}

%%%%%%

Since there are $(d+1)(h+1)-2=dh+d+h-1$ indeterminates $\frac{A_{i,j}}{A_{0,1}}$
the proposition defines a surjective map 
 $$f_{d,h}:\mathbb A_\k^{dh+d+h-1}\lgw \I(d,h).$$
 This map is not injective since different polynomials can have the same power series solution.\\
 In fact let us identify the set of polynomials of $\k[x,T]$ (up to multiplication with a nonzero constant of $\k$) of degree in $x$ less than $h$ and of degree in $T$ less than $d$ with $\mathbb P_\k^{(d+1)(h+1)-1}$ with homogeneous coordinates $A_{i,j}$. Then  $\mathbb A_\k^{dh+d+h-1}$ is identified with the set of polynomials $P(x,T)$ such that $P(0,0)=0$ and $\frac{\partial P}{\partial T}(0,0)\neq 0$. Thus here $\mathbb A_\k^{dh+d+h-1}$ is the intersection of the affine open chart $A_{0,1}\neq 0$ with the  hypersurface $A_{0,0}=0$.\\
 \\
 
We can state our first main result concerning the structure of $\I(d,h)$:

%%%%%%%%%%%%%%%%%%%%%%%%%%%%%%%%%%%%%%%%
%%%%%%%%%%%%%%%%%%%%%%%%%%%%%%%%%%%%%%%%
%%%%%%%%%%%%%%%%%%%%%%%%%%%%%%%%%%%%%%%%
\begin{thm}\label{Idh constructible} Assume that $\k$ is algebraically closed. We have the following properties:
\begin{itemize}
\item[1)] For every $d\geq 1$ and every $h\geq 1$, there is an injective map
$$\psi_{d,h}:\I(d,h)\lgw  \mathbb A_\k^{dh+d+h-1}$$
whose image is a constructible subset $\mathcal C_{d,h}$  that contains a non empty open subset of $\mathbb A_\k^{dh+d+h-1}$. \\
Here we identify $\mathbb A_\k^{dh+d+h-1}$ with the set of polynomials $P(x,t)$ such that
$$P(0,0)=0, \frac{\partial P}{\partial t}(0,0)=1, \deg_x(P)\leq h, \deg_t(P)\leq d,$$
and the map $\psi_{d,h}$ is defined by identifying $\I(d,h)$ with the subset of irreducible polynomials  in $\mathbb A_\k^{dh+d+h-1}$.\\
Moreover, we have
$$f_{d,h}\circ\psi_{d,h}=\id_{\I(d,h)}.$$
\item[2)] 
For every $d'\geq d$, $h'\geq h$, we denote by $\pi_{d',d,h',h}^{(1)}:\I(d',h')\lgw \I(d,h)$ and by $\pi_{d',d,h',h}^{(2)}: \mathbb A_\k^{d'h'+d'+h'-1}\lgw \mathbb A_\k^{dh+d+h-1}$ the canonical projection maps. Then we have that
$$\psi_{d,h}\circ\pi_{d',d,h',h}^{(1)}=\pi_{d',d,h',h}^{(2)}\circ\psi_{d',h'}.$$

\end{itemize}

\end{thm}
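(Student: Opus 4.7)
The plan is to send each $f\in\I(d,h)$ to its minimal polynomial $P_f$, uniquely normalized so that $\frac{\partial P_f}{\partial T}(0,0)=1$; this normalization is well defined because the minimal polynomial is unique up to a nonzero scalar in $\k$ and $\frac{\partial P_f}{\partial T}(0,0)\neq 0$ by definition of $\I(d,h)$. Since $f(0)=0$ also forces $P_f(0,0)=0$, the normalized $P_f$ belongs to $\mathbb A_\k^{dh+d+h-1}$. Injectivity of $\psi_{d,h}$ follows from the implicit function theorem: two series $f,g\in\I(d,h)$ with the same normalized minimal polynomial $P$ are both the unique power series root of $P$ vanishing at the origin, hence equal. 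The image of $\psi_{d,h}$ equals the set of irreducible polynomials in $\mathbb A_\k^{dh+d+h-1}$: every minimal polynomial is irreducible by construction, and conversely any irreducible $P\in\mathbb A_\k^{dh+d+h-1}$ has by the implicit function theorem a unique power series root $f$ with $f(0)=0$, and irreducibility then forces $P$ to be the normalized minimal polynomial of $f$, so $f\in\I(d,h)$ and $\psi_{d,h}(f)=P$. The identity $f_{d,h}\circ\psi_{d,h}=\id_{\I(d,h)}$ follows immediately from Lemma \ref{univ}.

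For constructibility I study the complement. The reducibility locus inside the space of bivariate polynomials of bidegree at most $(d,h)$ is the finite union, over all non-trivial bidegree splittings $(d_1,h_1)+(d_2,h_2)\leq(d,h)$ with neither factor constant, of images of the multiplication maps
\[
\mu_{d_1,h_1,d_2,h_2}\colon\ V(d_1,h_1)\times V(d_2,h_2)\lgw V(d,h),\quad (Q,R)\lgm QR,
\]
where $V(e,k)$ is the affine space of polynomials of bidegree at most $(e,k)$. Chevalley's theorem makes each image constructible, hence their finite union is constructible, hence so is the complement (the irreducibility locus). Intersecting with the linear conditions $A_{0,0}=0$, $A_{0,1}=1$ gives the constructible set $\mathcal{C}_{d,h}\subset\mathbb A_\k^{dh+d+h-1}$. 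To see that $\mathcal{C}_{d,h}$ contains a non-empty open subset of $\mathbb A_\k^{dh+d+h-1}$, I exhibit the explicit element $P_0=T-x\in\mathcal{C}_{d,h}$, which for every $d,h\geq 1$ lies in the normalized subspace and is irreducible in $\k[x,T]$ since its coefficients in $T^0$ and $T^1$ are coprime in $\k[x]$; over an algebraically closed field, absolute irreducibility of polynomials with bounded bidegree is a Zariski open condition (Noether's irreducibility theorem), so a Zariski open neighborhood of $P_0$ is contained in $\mathcal{C}_{d,h}$.

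Finally, part 2) is immediate from the intrinsic nature of the minimal polynomial: for $f\in\I(d,h)\subseteq\I(d',h')$, the normalized minimal polynomial is the same polynomial whether viewed in $\mathbb A_\k^{dh+d+h-1}$ or in $\mathbb A_\k^{d'h'+d'+h'-1}$, so the coefficients $A_{i,j}$ with $i>h$ or $j>d$ in $\psi_{d',h'}(f)$ all vanish and $\pi^{(2)}_{d',d,h',h}$ therefore returns exactly $\psi_{d,h}(f)=\psi_{d,h}\circ\pi^{(1)}_{d',d,h',h}(f)$, under the natural interpretation of $\pi^{(1)}_{d',d,h',h}$ as the canonical identification $\I(d,h)\hookrightarrow\I(d',h')$. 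The main obstacle is the identification of the image with the irreducibility locus together with the verification that $\mathcal{C}_{d,h}$ contains a non-empty open subset: both pass through classical but non-elementary facts (Chevalley's theorem for constructibility, Noether's irreducibility theorem for openness), while the rest of the argument is a direct application of the implicit function theorem and Lemma \ref{univ}.
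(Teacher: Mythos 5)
Most of your argument coincides with the paper's: you send $f$ to its normalized minimal polynomial, identify the image with the irreducible polynomials in the slice $\{A_{0,0}=0,\ A_{0,1}=1\}$, get constructibility by applying Chevalley to the multiplication maps, and deduce $f_{d,h}\circ\psi_{d,h}=\id$ from Lemma \ref{univ}; part 2) is the same routine compatibility check as in the paper. One small point of care: the image of $V(d_1,h_1)\times V(d_2,h_2)$ under multiplication contains products with a constant factor, hence contains scalar multiples of irreducible polynomials of smaller bidegree; so you must (as your phrase ``neither factor constant'' presumably intends) first restrict to the constructible locus of pairs of nonconstant polynomials and only then take images, otherwise the complement you form is strictly smaller than the irreducibility locus and misses, e.g., $T-x$ when $d\geq 2$.

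The genuine gap is the openness step. It is false that irreducibility is a Zariski open condition on the space of polynomials of bidegree \emph{at most} $(d,h)$, and in particular no Zariski open neighborhood of your witness $P_0=T-x$ is contained in $\mathcal C_{d,h}$ once $d\geq 2$: the one-parameter family $P_\epsilon=(T-x)(1-\epsilon T)=T-x+\epsilon(xT-T^2)$ lies in the normalized slice, is reducible for every $\epsilon\neq 0$, and its Zariski closure contains $P_0$, so every neighborhood of $P_0$ meets the reducible locus. Noether's irreducibility theorem does not rescue this: it asserts that the locus ``not absolutely irreducible \emph{or of degree strictly less than} $d$'' is closed, so the open set it provides consists of irreducible polynomials of exact top degree and excludes $T-x$ when $d\geq 2$. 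This is exactly the point the paper handles differently: it bounds the dimension of each reducibility piece $C_{d_1,d_2,h_1,h_2}$ via the computation \eqref{ineg} (the codimension is at least $d_1h_2+d_2h_1$, with the degenerate equality cases discarded), so the whole reducible locus is a constructible set of dimension $<dh+d+h-1$, whose closure is proper, and therefore its complement $\mathcal C_{d,h}$ contains a non-empty open subset. To repair your proof you must either carry out such a dimension estimate, or exhibit an absolutely irreducible polynomial of exact bidegree $(d,h)$ inside the normalized slice and invoke the correct (exact-degree) form of Noether's theorem around that point; the witness $T-x$ alone cannot do the job.
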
 

\begin{rem}
We can see in the proof that the map $\psi_{d,h}$ is the map sending a series of $\I(d,h)$ onto the vector of coefficients of its minimal polynomial, which is normalized in the sense that  the coefficient of $x^0T^1$ is $1$.
\end{rem}

\begin{proof}
The map $f_{d,h}:\mathbb A_\k^{dh+d+h-1}\lgw \I(d,h)$ is not injective since different polynomials may have the same root. To get an injective map, we need to restrict the map to the set of irreducible polynomials. Indeed, by Remark \ref{min_poly}, the minimal polynomial of $f\in\I(d,h)$ satisfies the implicit function theorem.  Therefore we have to remove from $\mathbb A_\k^{dh+d+h-1}$ the points corresponding to the reducible polynomials. We can do it as follows:\\
For every integers $d_1$, $d_2$, $h_1$, $h_2$ with $d_1+d_2\leq d$ and $h_1+h_2\leq h$ set 
$$Q_{d_1,h_1}=\sum_{i\leq h_1, j\leq d_1}q_{i,j}x^i T^j,\ R_{d_2,h_2}=\sum_{i\leq h_2, j\leq d_2}r_{i,j}x^i T^j,$$
for some variables $q_{i,j}$ and $p_{i,j}$. Then the product $Q_{d_1,h_1}R_{d_2,h_2}$ is a polynomial $P=\sum_{i\leq h, j\leq d}a_{i,j}x^i T^j$ where the $a_{i,j}$ are polynomials in the $q_{i,j}$ and $r_{i,j}$. The product defines a rational map
$$\Phi_{d_1,d_2,h_1,h_2}:\mathbb A_\k^{(d_1+1)(h_1+1)}\times \mathbb A_\k^{(d_2+1)(h_2+1)}\lgw \mathbb A_\k^{(d+1)(h+1)}$$
whose image can be identified with the polynomials $P$, with $\deg_T(P)\leq h_1+h_2$ and $\deg_x(P)\leq d_1+d_2$, that are the product of 2 polynomials of degrees less than $(h_1,d_1)$ and $(h_2,d_2)$. In fact it is straightforward to check that this map is defined by bi-homogeneous polynomials  so it induces a rational map
$$ \mathbb P_\k\Phi_{d_1,d_2,h_1,h_2}:\mathbb P_\k^{(d_1+1)(h_1+1)-1}\times \mathbb P_\k^{(d_2+1)(h_2+1)-1}\lgw \mathbb P_\k^{(d+1)(h+1)-1}$$
Here we want to consider only polynomials $P$ whose constant term is zero. If such a polynomial is the product of two polynomials $Q$ and $R$ then the constant term of $Q$ or of $R$ has to be zero. Thus here we set
$$C_{d_1,d_2,h_1,h_2}=\Im({\mathbb P_\k\Phi_{d_1,d_2,h_1,h_2}}_{|\mathbb P_\k^{(d_1+1)(h_1+1)-2}\times \mathbb P_\k^{(d_2+1)(h_2+1)-1}})\cup $$
$$\cup\Im({\mathbb P_\k\Phi_{d_1,d_2,h_1,h_2}}_{|\mathbb P_\k^{(d_1+1)(h_1+1)-1}\times \mathbb P_\k^{(d_2+1)(h_2+1)-2}})$$
and the intersection of this set with $\mathbb A_\k^{dh+d+h-1}$ corresponds exactly to the set of polynomials $P$ with $P(0,0)=0$ and $\frac{\partial P}{\partial T}(0,0)\neq0$ that are the product of two polynomials whose degrees are coordinatewise less than or equal to  $(d_1,h_1)$ and $(d_2,h_2)$.\\
Let us remark that the dimension of $\mathbb P_\k^{(d_1+1)(h_1+1)-2}\times \mathbb P_\k^{(d_2+1)(h_2+1)}$ is 
$$(d_1+1)(h_1+1)+(d_2+1)(h_2+1)-3.$$
But we have that
\begin{equation}\label{ineg}\begin{split}(d_1+d_2)(h_1+h_2)+d_1+d_2&+h_1+h_2-1-\left((d_1+1)(h_1+1)+(d_2+1)(h_2+1)-3\right)\\
&=d_1h_2+d_2h_1\geq 0\end{split}\end{equation}
So if $d_1+d_2<d$ or $h_1+h_2<h$ then  $\dim(C_{d_1,d_2,h_1,h_2})<(d+1)(h+1)$.\\
 If $d_1+d_2=d$ and $h_1+h_2=h$  we have equality in \eqref{ineg} if and only if
$$(d_1,h_1) \text{ or } (d_2,h_2)=0,$$
$$(d_1,d_2)=0, $$
$$\text{ or } (h_1,h_2)=0.$$
In the first case $Q$ or $R$ is a nonzero constant of $\k$. In the second case we have that $P=QR$ is of degree $0$ in $T$ and so it does not correspond to a point in $\mathbb A_\k^{dh+d+h-1}$. In the last case $P=QR$ is of degree $0$ in $x$ so  its roots are in the algebraic closure of $\k$, and the only algebraic power series vanishing at 0 which is in $\k$ is 0, and this case cannot occur if $h\geq 1$.\\
Hence, 
in all the cases we need to consider, we have 
\begin{equation}\label{dim}\dim(C_{d_1,d_2,h_1,h_2})<dh+d+h-1.\end{equation} 
So, when $h\geq 1$, we can identify $\I(d,h)$ with
$$\mathcal C_{d,h}:=\mathbb A_\k^{dh+d+h-1}\backslash \bigcup _{\begin{array}{c}\scriptscriptstyle d_1,d_2,h_1,h_2\\ 
\scriptscriptstyle d_1+d_2\leq d, h_1+h_2\leq h\\\scriptscriptstyle d_1h_2+d_2h_1>0\end{array}} C_{d_1,d_2,h_1,h_2}$$
which is a constructible set by Chevalley's Theorem. Finally this former set contains an open subset of $\mathbb A_\k^{dh+d+h-1}$ by \eqref{dim}. This proves 1).\\
\\
Let $d'\geq d$ and $h'\geq h$. 
From the construction of $f_{d',h'}$ and $\psi_{d',h'}$, it is straightforward to see that 
$$f_{d',h'_{|\AAA_\k^{dh+d+h-1}}}=f_{d,h}\ \text{ and }\  \psi_{d',h'_{|\AAA_\k^{dh+d+h-1}}}=\psi_{d,h}.$$
This proves 2).
\end{proof}

%%%%%%%%%%%%%%%%%%%%
%%%%%%%%%%%%%%%%%
%%%%%%%%%%%%%%%%%%%%
%%%%%%%%%%%%%%%%%
%%%%%%%%%%%%%%%%%%%%
%%%%%%%%%%%%%%%%%
Our second result describes the structure of $\A(d,h)_0$ as a constructible set.
\begin{thm}\label{cor}
Assume that $\k$ is algebraically closed. Let $d$, $h$, and $e\in\N$. Then we have the following properties:
\begin{enumerate}
\item[i)] The image $\mathcal  C_{d,h,e}$  of the injective map
$$\left(\Id_{\mathbb A_\k^e}\times\psi_{d,h+e(d-2)}\right)\circ \phi_{d,h,e}:\A(d,h,e)\lgw \mathbb A_k^e\times\mathcal  C_{d,h+e(d-2)}$$ 
is constructible in $\mathbb A_k^e\times \mathcal C_{d,h+e(d-2)}$. 
\item[ii)] The set $\A(d,h)_0$ can be identified with the constructible set
$$\bigcup_{e\leq 2dh}\mathcal  C_{d,h,e}\subset \mathbb A_\k^{N(d,h)}$$
for some $N(d,h)\in\N$.
\item[iii)]
The dimension of this constructible set is $dh+d+h-1$.
\end{enumerate}
\end{thm}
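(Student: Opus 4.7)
My plan is to address the three parts in order, with the main technical work concentrated in (i). To prove (i), I would characterize $\mathcal C_{d,h,e}$ as the set of pairs $(f^{(0)},P)\in \mathbb A_\k^{e}\times \mathcal C_{d,h+e(d-2)}$ whose associated series $f := f^{(0)} + x^e f^{(1)}$, with $f^{(1)} = f_{d,h+e(d-2)}(P)$ furnished by Lemma \ref{univ}, belongs to $\A(d,h,e)$. Under the normalization $A_{0,1}=1$ induced by $\psi_{d,h+e(d-2)}$, the coefficients of $f$ in $\k[[x]]$ depend polynomially on $(f^{(0)},P)$. The condition $f\in\A(d,h,e)$ splits into two parts: (a) $f$ admits a minimal polynomial $Q$ with $\deg_T Q\leq d$ and $\deg_x Q\leq h$; and (b) $\ord(\partial Q/\partial T(f))=e$.

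For (a), writing a general candidate $\tilde Q = \sum_{i\leq h,\,j\leq d}b_{ij}x^iT^j$ and imposing $\tilde Q(f)=0$ coefficient-wise in $\k[[x]]$ produces an infinite $\k$-linear system in the $b_{ij}$'s whose matrix $M$ has polynomial entries in $(f^{(0)},P)$. The condition $\ker M\neq 0$ is the complement of the ascending union of open loci $\{\rank M_N=(d+1)(h+1)\}$, where $M_N$ truncates $M$ to its first $N$ rows; by Noetherianity of the ambient affine space this union stabilizes, yielding a closed algebraic condition. For (b), the key observation is that $\ker M=\{c(x)Q(x,T) : c\in\k[x],\ \deg c\leq h-\deg_x Q\}$, so any nonzero $\tilde Q\in\ker M$ satisfies $\ord(\partial\tilde Q/\partial T(f))=\ord(c)+e$. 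Hence (b) is equivalent to the minimum of these orders over $\ker M\setminus\{0\}$ being exactly $e$. Writing $\partial\tilde Q/\partial T(f)=\sum_k d_k(\tilde Q)x^k$, where each $d_k$ is a $\k$-linear functional in the $b_{ij}$'s with polynomial coefficients in $(f^{(0)},P)$, the condition reformulates as the rank equality ``$d_k$ lies in the row span of $M$'' for $k<e$ together with its failure at $k=e$, all constructible rank conditions. Part (ii) is then bookkeeping: by Lemma \ref{bound_der}, $\A(d,h)_0=\bigsqcup_{e\leq 2dh}\A(d,h,e)$ is a finite disjoint union of pieces constructible in some $\mathbb A_\k^{M(d,h,e)}$, and embedding them into a common $\mathbb A_\k^{N(d,h)}$ with an extra coordinate recording the value of $e$ to keep strata disjoint gives a constructible realization.

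For (iii), the lower bound $\dim\A(d,h)_0\geq dh+d+h-1$ is immediate from Theorem \ref{Idh constructible} applied to $\A(d,h,0)=\I(d,h)$. For the upper bound I would introduce the incidence constructible set
\[
Z := \{(f,[Q])\in\A(d,h)_0\times\mathbb P_\k^{(d+1)(h+1)-1} \mid Q\text{ irreducible},\ Q(0,0)=0,\ \deg_T Q\leq d,\ \deg_x Q\leq h,\ Q(f)=0\},
\]
whose first projection is a bijection onto $\A(d,h)_0$ (each $f$ has a unique minimal polynomial of this shape), giving $\dim Z=\dim\A(d,h)_0$. The second projection lands in the hyperplane $\{a_{0,0}=0\}\subset\mathbb P_\k^{(d+1)(h+1)-1}$ of dimension $dh+d+h-1$ and has fibers of cardinality at most $d$, so the fiber dimension theorem for constructible sets yields $\dim Z\leq dh+d+h-1$. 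The main obstacle throughout is the constructibility of (b) in (i), where one must control the algebraic variation of $\ker M$ across the parameter space; the rank-equality reformulation above is what bypasses the need to extract the minimal polynomial $Q$ explicitly from $(f^{(0)},P)$.
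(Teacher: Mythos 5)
Your proposal is correct in substance, but it takes a genuinely different route from the paper on both of the hard points. For (i), the paper does not work with the evaluation map at all: it parametrizes $f=f^{(0)}+x^ef^{(1)}$ by the pair $(f^{(0)},R)$, introduces the incidence variety $V_{d_1,d_2,d,e,h}$ of factorizations $a(x)S(x,T)=x^{ed}R\bigl((T-f^{(0)})/x^e\bigr)$, intersects it with the constructible loci of irreducible polynomials and with the locus $\ord\bigl(\tfrac{\partial S}{\partial T}(x,f^{(0)})\bigr)=e$, and obtains $\mathcal C_{d,h,e}$ as a finite union of images of projections via Chevalley's theorem; you instead encode ``$f$ has a vanishing polynomial of bidegree $\le(h,d)$'' and the order condition on $\tfrac{\partial Q}{\partial T}(f)$ as kernel/row-span rank conditions on an infinite matrix $M$ with entries polynomial in $(f^{(0)},P)$, which avoids the factorization equation and Chevalley altogether. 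For (iii), the paper bounds each stratum separately, proving by an explicit height computation with the ideals $I_p=\langle S_{0,0},G_0,\ldots,G_p\rangle$ that $\dim\A(d,h,e)<dh+d+h-1$ for $e\ge 1$, whereas you get the upper bound for all of $\A(d,h)_0$ at once from the incidence set $Z$ (series, minimal polynomial) with finite fibers on both projections; your argument is shorter, while the paper's yields the finer per-stratum dimension drop. Two points in your (i) should be tightened, though both are reparable: first, $\ker M$ consists of all products $c(x,T)Q(x,T)$ with $\deg_x c\le h-\deg_x Q$ and $\deg_T c\le d-\deg_T Q$, not only $\k[x]$-multiples of $Q$; since $\tfrac{\partial (cQ)}{\partial T}(f)=c(x,f)\tfrac{\partial Q}{\partial T}(f)$, the minimum of the orders over $\ker M\setminus\{0\}$ is still $\ord\bigl(\tfrac{\partial Q}{\partial T}(f)\bigr)$, so your reformulation of (b) stands, but your displayed formula $\ord=\ord(c)+e$ fails when $c$ involves $T$. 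Second, your Noetherian-stabilization remark covers the condition $\ker M\neq 0$ (an increasing union of \emph{open} full-rank loci), but the loci $\{\rank(M_N\mid d_k)=\rank M_N\}$ are only constructible, so for (b) you should argue instead that $\{\rank M\ge r\}$ is open (being an increasing union of open sets), hence $\{\rank M=r\}$ is locally closed and the conditions ``$d_k$ in the row span of $M$'' are constructible on each such stratum; alternatively, a uniform truncation of $M$ can be justified by an effective zero estimate (e.g.\ a resultant bound on $\ord\,\tilde Q(x,f)$ for $\tilde Q$ not a multiple of the minimal polynomial).
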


\begin{rem}
In particular, the map 
$$\left(\left(\Id_{\mathbb A_\k^e}\times\psi_{d,h+e(d-2)}\right)\circ\phi_{d,h,e}\right)^{-1}:\mathcal  C_{d,h,e}\lgw \A(d,h,e)$$
is a regular map in the sense that all the coefficients of the series $f\in\A(d,h,e)$ are polynomial functions into the coordinates on $\mathbb A_\k^{N(d,h)}$ (by Lemma \ref{univ}) and are well defined on $\mathcal  C_{d,h,e}$.
\end{rem}

\begin{rem}\label{emb_dim}
In the proof we show that we can choose 
$$N(d,h)=2dh(d-2)(d+1)+3dh+d+h-1.$$
\end{rem}

\begin{proof}[Proof of Theorem \ref{cor}]
We do the following: let $f^{(0)}\in \k^e$, $f^{(1)}\in \I(d,h+e(d-2))$, and $f:=f^{(0)}+x^ef^{(1)}$. We  want to find necessary and sufficient conditions for $f$ to be in $\A(d,h,e)$, that is we want to describe the image of the map $\phi_{d,h,e}$ defined in Lemma \ref{truncation}.\\
Let $R(Z)\in\k[x,Z]$ be a polynomial such that
$$\deg_Z(R)\leq d,\ \deg_x(R)\leq h+e(d-2),$$
$$R(f^{(1)})=0,\text{ and } \frac{\partial R}{\partial Z}(0,0)\neq 0.$$
We assume that $R(Z)$ is irreducible. For such a $f\in\A(d,h,e)$,
 we set
$$P(T)=x^{ed}R\left(\frac{T-f^{(0)}}{x^e}\right).$$
Then $P(f^{(0)}+x^ef^{(1)})=0$, i.e $P$ vanishes at $f$. Moreover $\deg_x(P)\leq h+ed$. 
The map
$$(f^{(0)},R(T))\lgm  P(T):=x^{ed}R\left(\frac{T-f^{(0)}}{x^e}\right)$$
is a polynomial map from $\AAA_\k^e\times\P_\k^{(d+1)(h+e(d-2)+1)-1}$ to $\P_\k^{(h+ed+1)(d+1)-1}$. \\
\\
Let $S(T)$ be an irreducible polynomial vanishing at $f$. Since the following field extensions degrees are equal:
$$[\k(\!(x)\!)(f):\k(\!(x)\!)]=[\k(\!(x)\!)(f^{(1)}):\k(\!(x)\!)],$$
we have $\deg_T(S)=\deg_T(R)$. Since $\deg_T(P)=\deg_T(R)$, there is a polynomial $a(x)\in\k[x]$ such that
\begin{equation}\label{cond1}P(x,T)=a(x)S(x,T),\end{equation}
\begin{equation}\label{cond2}\frac{\partial P}{\partial T}(x,0)=a(x)\frac{\partial S}{\partial T}(x,0).\end{equation}
Then, $f\in\A(d,h,e)$ if and only if $S$ and $R$ are irreducible, $\deg_x(S)\leq h$ and $\ord\left(\frac{\partial S}{\partial T}(x,f^{(0)})\right)=e$.\\
\\
We denote by $\P_\k^{d_1}$ (resp. $\P_\k^{(d_2+1)(d+1)-1}$) the projective space of nonzero polynomials of $\k[x]$ (resp. $\k[x,T]$) of degree $\leq d_1$ (resp. $\leq d_2$ in $x$ and $\leq d$ in $T$) modulo multiplication by elements of $\k^*$. For every $d_1$, $d_2$, $d$, $e$, $h$, we denote by $V_{d_1,d_2,d,e,h}$ the algebraic set of elements
 $$(a(x),S(x,T),f^{(0)},R(T))\in \P_\k^{d_1}\times\P_\k^{(d_2+1)(d+1)-1}\times \AAA_\k^e\times \P_\k^{(d+1)(h+e(d-2)+1)-1}$$
  such that
\begin{equation}\label{key_eq}a(x)S(x,T)-x^{ed}R\left(\frac{T-f^{(0)}}{x^e}\right)=0.\end{equation}
Recall that $f\in\A(d,h,e)$ is identified with $(f^{(0)}, R(x,T))\in \AAA_\k^e\times \P_\k^{(d+1)(h+e(d-2)+1)-1}$. Therefore $f\in \A(d,h,e)$ if and only if there is $(a(x),S(x,T))\in \P_\k^{d_1}\times\P_\k^{(d_2+1)(d+1)-1}$, for some $d_1$, $d_2$ with $d_1+d_2\leq h+ed$ and $d_2\leq h$,  such that 
$$\left\{\begin{matrix} R(x,T) \text{ and } S(x,T) \text{ are irreducible,}\\
R(0,0)=0,\ \frac{\partial R}{\partial T}(0,0)\neq 0,\\
(a(x),S(x,T),f^{(0)}(x),R(x,T))\in V_{d_1,d_2,d,e,h}\\
\text{ and } \ord\left(\frac{\partial S}{\partial T}(x,f^{(0)})\right)=e.\end{matrix}\right.$$
The set of irreducible polynomials  $S(x,T)$, with $\deg_x(S)\leq d_2$, $\deg_T(S)\leq d$, is a constructible subset  $C^1_{d_2, d}\subset \P_\k^{(d_2+1)(d+1)-1}$  as shown in the proof of Lemma \ref{Idh constructible}. Moreover, by Lemma \ref{Idh constructible}, the set of irreducible polynomials $R(x,T)$ such that $R(0,0)=0$, $\frac{\partial R}{\partial T}(0,0)\neq 0$, $\deg_x(R)\leq h$, $\deg_T(R)\leq d$, is a constructible subset $C^2_{d,h}$ of $\P_\k^{(d+1)(h+e(d-2)+1)-1}$.\\
The condition  $\ord\left(\frac{\partial S}{\partial T}(x,f^{(0)})\right)=e$ defines a constructible set 
$$C'_{d_2,d,e}\subset \P_\k^{(d_2+1)(d+1)-1}\times \AAA_\k^e.$$
Now, we consider the following projections:
$$\pi_{3,4}:  \P_\k^{d_1}\times\P_\k^{(d_2+1)(d+1)-1}\times \AAA_\k^e\times \P_\k^{(d+1)(h+e(d-2)+1)-1}\lgw  \AAA_\k^e\times \P_\k^{(d+1)(h+e(d-2)+1)-1}$$
$$\pi_{2,3}: \P_\k^{d_1}\times\P_\k^{(d_2+1)(d+1)-1}\times \AAA_\k^e\times \P_\k^{(d+1)(h+e(d-2)+1)-1}\lgw \P_\k^{(d_2+1)(d+1)-1}\times\AAA_\k^e$$
and we set
$$E_{d_1,d_2,d,e,h}:=\left( \P_\k^{d_1}\times C'_{d_2,d,e}\times \P_\k^{(d+1)(h+e(d-2)+1)-1}\right)\cap V_{d_1,d_2,d,e,h}\cap\left( \P_\k^{d_1}\times C^1_{d_2,d}\times  \AAA_\k^e\times C^2_{d,h}\right).$$
Then we have that $\A(d,h,e)$ is equal to
\begin{equation*} \bigcup_{\begin{matrix}\scriptscriptstyle d_1+d_2\leq h+ed\\ \scriptscriptstyle d_2\leq h\end{matrix}}\pi_{3,4}\left(E_{d_1,d_2,d,e,h}\right)\end{equation*}
that is a constructible subset $\mathcal C_{d,h,e}$ of $\mathbb A_k^e\times\mathbb A_k^{(d+1)(h+e(d-2)+1)-1}$.\\
\\
We claim that ${\pi_{3,4}}_{|_{E_{d_1,d_2,d,e,h}}}$ is injective. Indeed, for $(f^{(0)},R(x,T))\in \A(d,h,e)$, there is a unique couple $(a(x),S(x,T))$ such that 
$(a(x),S(x,T), f^{(0)}, R(x,T))\in E_{d_1,d_2,d,e,h}$, by    \eqref{key_eq} and because $S(x,T)$ has to be irreducible.
 \\
On the other hand, the fiber of $\pi_{2,3}$ over $(S(x,T),f^{(0)})\in C'_{d_2,d,e}\cap \left(C^1_{d_2,d}\times\AAA_\k^e\right)$ is  finite. Indeed, if there is $(a(x),R(x,T))\in  \P_\k^{d_1}\times \P_\k^{(d+1)(h+e(d-2)+1)-1}$ such that 
$$a(x)S(x,T)-x^{ed}R\left(\frac{T-f^{(0)}}{x^e}\right)=0,$$
then, $f^{(0)}+x^ef^{(1)}$ is a root of $S(x,T)$ where $f^{(1)}$ is the unique solution of $R(x,T)=0$ vanishing at $x=0$. Therefore $\pi_{2,3}^{-1}((S,f^{(0)}))$ is finite because $S(x,T)$ has a finite number of roots. \\
Therefore the dimension of $E_{d_1,d_2,d,e,h}$
is equal to the dimension of its image under $\pi_{2,3}$. 
First let us assume that $(S(x,T),f^{(0)})\in \pi_{2,3}(E_{d_1,d_2,d,e,h})$. Indeed, by \eqref{key_eq}, we have that $S(x,f^{(0)}+x^e f^{(1)})=0$ where $f^{(1)}$ is the unique solution of some polynomial equation $R(x,T)=0$ with $R(0,0)=0$ and $\frac{\partial R}{\partial T}(0,0)\neq 0$. Therefore $S(0,0)=0$.
We denote by $C^3_{d_2,d}$ the set of polynomials $S$ such that $S(0,0)=0$. Then $\pi_{2,3}(E_{d_1,d_2,d,e,h})$ is included in $C'_{d_2,d,e}\cap (C^3_{d_2,d}\times \AAA_\k^e)$.\\
We are going to bound the dimension of $C'_{d_2,d,e}\cap (C^3_{d_2,d}\times \AAA_\k^e)$ as follows: \\
We denote by $F_k$ the coefficient of $x^k$ in the expansion of $f^{(0)}$, for $1\leq k\leq e$. We denote by $S_{i,j}$ the coefficient of $x^iT^j$ in $S(x,T)$, for $0\leq i\leq d_2$ and $0\leq j\leq d$.  
The coefficient of $x^l$ in $S(x,f^{(0)})$ is a polynomial $G_l$ with integer coefficients depending on the indeterminates $F_k$ for $k\leq l$, and $S_{i,j}$ for $i\leq l-1$. Therefore $C'_{d_2,d,e}\cap (C^3_{d_2,d}\times \AAA_\k^e)$ is defined by the equations :
$$\left\{\begin{matrix} S_{0,0}=0\\
G_l(F_k, S_{i,j})_{ k\leq l, i\leq l}=0,\ \ l<e\\
G_e(F_k, S_{i,j})_{k\leq e, i\leq e-1}\neq 0 \end{matrix}\right.$$
and the ideal defining the Zariski closure of $C'_{d_2,d,e}\cap (C^3_{d_2,d}\times \AAA_\k^e)$ is the radical ideal of the ideal generated by $S_{0,0}$ and the $G_l$ for $l<e$. For any $p<e$, we set
$$I_p:=\langle S_{0,0}, G_0,\ldots, G_p\rangle$$
$$\text{and } I_{-1}=\langle S_{0,0}\rangle.$$
These are ideals depending only on the following indeterminates (if $p\geq 0$):
$$F_k, \text{ for }k\leq p, \text{ and } S_{i,j} \text{ for } i\leq p.$$
Moreover $G_p$ has the form
$$G_p=S_{p,0}+\wdt G_p(F_k, S_{i,j})_{k\leq p, i\leq p-1}.$$
Therefore we have, for all $p\leq e$:
$$\haut(I_{p-1})\leq \haut(I_p\cap\Q[F_k,S_{i,j}]_{k\leq p-1, i\leq p-1})<\haut(I_p).$$
In particular we have that $\haut(I_e)\geq e+2$. Hence
$$\dim(C'_{d_2,d,e}\cap (C^3_{d_2,d}\times \AAA_\k^e))\leq \left[e+(d+1)(d_2+1)-1\right]-e-2\leq (d+1)(h+1)-3.$$
Therefore 
$$\dim(\A(d,h,e))< (d+1)(h+1)-2.$$
Moreover, when $e=0$, we know that $\mathcal{A}(d,h,0)=\mathcal{I}(d,h)$ and by Theorem \ref{Idh constructible}, this set is a constructible set of dimension $(d+1)(h+1)-2$. Thus,
$$\A(d,h)_0=\bigcup_{e\leq 2dh}\mathcal \A(d,h,e)$$
is a constructible set of dimension equal to $(d+1)(h+1)-2$. 
\end{proof}
%%%%%%%%%%%%%%%%%%%%%%%%%%
%%%%%%%%%%%%%%%%%%%%%
%%%%%%%%%%%%%%%%%%%%
%%%%%%%%%%%%%%%%%

\section{Points of bounded complexity in varieties over $\k(\!(x)\!)$}
Let $E$ be a subset of $\k(\!(x)\!)^n$.
For all non negative integers $d$ and $h$, we set
$$E_{d,h}:=E\cap \A(d,h)^n$$
and we denote by $n_{d,h}(E)$ the dimension of the Zariski closure of $E_{d,h}$ in the affine space $\mathbb  A_{\k}^{nN(d,h)}$ where $N(d,h)$ is given in Remark \ref{emb_dim}.

\begin{thm}\label{application}
Let $\k$ be an algebraically closed field and let $X$ be an algebraic subvariety of $\mathbb{A}_{\k(\!(x)\!)}^n$ of dimension $m$. Then
$$n_{d,h}(X)\leq m(dh+d+h).$$
\end{thm}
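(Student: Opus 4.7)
I would prove the bound by induction on $m=\dim X$. The base case $m=0$ is immediate since $X$ is then a finite set of $\k(\!(x)\!)$-points and $n_{d,h}(X)=0$. For the inductive step, because $n_{d,h}$ of a finite union equals the maximum of the $n_{d,h}$'s of its parts, it is enough to treat $X$ irreducible of dimension $m$.

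Since $\mathrm{tr.deg}_{\k(\!(x)\!)}\k(\!(x)\!)[X]=m$, after permuting the coordinates I may assume that $y_1,\dots,y_m$ are algebraically independent on $X$. For each $j\in\{m+1,\dots,n\}$ there is then a polynomial $F_j(y_1,\dots,y_m,T)\in I(X)$ of $T$-degree $d_j$ whose leading coefficient $a_{d_j}(y_1,\dots,y_m)\in\k(\!(x)\!)[y_1,\dots,y_m]$ is not identically zero on $X$ (injectivity of the evaluation map given by algebraic independence). Let $X':=X\cap V(\prod_j a_{d_j})$, a proper closed subvariety of $X$ of dimension at most $m-1$. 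Partition
$$X_{d,h}=X_{d,h}^{\mathrm{good}}\sqcup X_{d,h}^{\mathrm{bad}},$$
where $X_{d,h}^{\mathrm{bad}}=X_{d,h}\cap X'$ and $X_{d,h}^{\mathrm{good}}$ consists of those $(f_1,\dots,f_n)\in X_{d,h}$ for which every $a_{d_j}(f_1,\dots,f_m)$ is a nonzero element of $\k(\!(x)\!)$.

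I would first observe that $X_{d,h}$, and likewise $X_{d,h}^{\mathrm{good}}$, is constructible in $\mathbb{A}_\k^{nN(d,h)}$: by the remark following Theorem \ref{cor}, on each stratum the coefficients of the series coordinates are polynomial functions of the ambient coordinates, so every equation $P_j(f_1,\dots,f_n)=0$ from a defining system of $X$ translates to a family of polynomial equations in those coordinates, which by Noetherianity cuts out a Zariski-closed subset; the conditions $a_{d_j}(f_1,\dots,f_m)\neq 0$ are open. Consider then the coordinate projection $\pi:\mathbb{A}_\k^{nN(d,h)}\to\mathbb{A}_\k^{mN(d,h)}$ onto the factors corresponding to the first $m$ copies of $\A(d,h)$. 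Over $X_{d,h}^{\mathrm{good}}$ the fibers of $\pi$ are finite: fixing $(f_1,\dots,f_m)$ in the image, each $F_j(f_1,\dots,f_m,T)$ is a polynomial of exact degree $d_j$ in $T$ with coefficients in $\k(\!(x)\!)$, hence has at most $d_j$ roots there, bounding the fiber by $\prod_j d_j$. The standard dimension estimate for constructible sets with generically finite projection (each irreducible component $W$ of $\overline{X_{d,h}^{\mathrm{good}}}$ meets the constructible set $X_{d,h}^{\mathrm{good}}$ in a subset containing a dense open of $W$; the finite-fiber condition on that open forces the generic fiber of $\pi|_W$ to be zero-dimensional, so $\dim W=\dim\overline{\pi(W)}$), combined with Theorem \ref{cor}(iii) and Remark \ref{glob} which give $\dim\A(d,h)^m=m(dh+d+h)$, yields
$$\dim\overline{X_{d,h}^{\mathrm{good}}}\leq\dim\overline{\pi(X_{d,h}^{\mathrm{good}})}\leq\dim\A(d,h)^m=m(dh+d+h).$$
Since $X_{d,h}^{\mathrm{bad}}\subset X'_{d,h}$ and $\dim X'\leq m-1$, the induction hypothesis gives $n_{d,h}(X')\leq(m-1)(dh+d+h)$; combining the two bounds produces $n_{d,h}(X)\leq m(dh+d+h)$.

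The delicate point is the dimension estimate for the good part. For arbitrary subsets, finite fibers alone do not bound the dimension of the Zariski closure (Zariski-dense arithmetic subsets of $\mathbb{A}^2$ can project injectively to $\mathbb{A}^1$); it is the constructibility of $X_{d,h}^{\mathrm{good}}$ that rescues the argument, through the fact that a constructible set which is dense in an irreducible variety always contains a dense open of it, on which the finite-fiber hypothesis can be leveraged to conclude that the generic fiber has dimension zero.
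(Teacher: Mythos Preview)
Your argument is correct and follows the same inductive skeleton as the paper, but the execution of the inductive step is genuinely different.

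The paper does \emph{not} reduce to $X$ irreducible. Instead it runs over \emph{all} coordinate projections $\pi_E:\mathbb{A}^n\to\mathbb{A}^m$ with $|E|=m$, defines for each the non-finite locus $B_E\subset\pi_E(X)$, bounds the good part $X\setminus\pi_E^{-1}(B_E)$ exactly as you do (finite fibers over $\A(d,h)^m$), and then has to show that the residual set $\bigcap_E\pi_E^{-1}(B_E)$ has dimension at most $m-1$. This last point is nontrivial and is handled by a clever inductive choice of subsets $E_1,\dots,E_{n-m+1}$ and indices $i_k\in E_k$, together with Krull's principal ideal theorem, to force the height of $\sum_k J_{E_k}$ to be at least $n-m+1$.

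Your route sidesteps this entirely: once $X$ is irreducible, a single coordinate projection chosen via a transcendence basis suffices, and the bad locus $X'=X\cap V(\prod_j a_{d_j})$ is automatically a \emph{proper} closed subvariety, hence of dimension $\leq m-1$. This is simpler and avoids the Krull argument. You pay for it by having to be more explicit about constructibility of $X_{d,h}^{\mathrm{good}}$ (which you handle correctly via the remark after Theorem~\ref{cor} and Noetherianity), whereas the paper leaves the analogous step implicit. Your careful justification that finite fibers on a constructible set suffice for the dimension bound (via the ``dense constructible contains a dense open'' principle) is exactly the point the paper's remark about non-stability of $\A(d,h)$ under linear changes of coordinates is alluding to, and you address it cleanly.
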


\begin{rem}
This result is an analogue of Lemma \cite[5.1.1]{CCL} (that gives the same kind of bound on the dimension of the polynomial solutions of degree $\leq d$). The proof is based on the use of linear projections. But, on the contrary of the polynomials of degree $\leq d$, the difficulty comes from the fact that the sets $\A(d,h)$ are not stable by linear change of coordinates with coefficients in $\k$ (since these sets are not $\k$-vector spaces).
\end{rem}

\begin{proof}[Proof of Theorem \ref{application}]
We will prove the statement by induction on $m$. If $m=0$, then $X$ is finite and $n(d,h)=0$.\\
Now assume that the statement is true for every integer less than $m$ and let $X$ be of dimension $m$. For every $E\subset \{1,\ldots, n\}$ with $\operatorname{Card}(E)=m$, we denote by $\pi : \mathbb A^n\lgw \mathbb A^m$
 the projection defined by $\pi_E(x_1,\ldots, x_n):=(x_i)_{i\in E}$. For such a set $E$, we define $B_E$ to be  the subset of points of $\pi_E(X)$ where $\pi_{E|X}$ is not finite. Then $\pi_E(X)\setminus B_E$ contains an open set of the Zariski closure of $\pi_E(X)$ by the upper semi-continuity of the dimension of the fibers of $\pi_E$. \\
Therefore the dimension of the Zariski closure of the set of $\A(d,h)$-points of $X\setminus \pi_E^{-1}(B_E)$ has dimension less or equal to $\dim(\A(d,h)^m)$, which is equal to $m(dh+d+h)$ by Remark \ref{glob} and  Theorem \ref{cor}. 
Thus we can replace $X$ by $X\cap \pi_E^{-1}(B_E)$. We repeat this operation for every $E$ as above and we assume that
$$X\subset \bigcap_{E\subset \{1,\ldots, n\},\ \operatorname{Card}(E)=m}\pi_E^{-1}(B_E).$$
By considering the different irreducible components of the $B_E$ separately, we may assume that all the $B_E$ are irreducible. Therefore the ideal of $\pi_E^{-1}(B_E)$ is a prime ideal $J_E$ generated by polynomials depending only on the $x_i$ for $i\in E$.\\
\\
We fix such a set $E$ that we denote by $E_1$. We also fix an index $i_1\in E_1$ such that $J_{E_1}$ is generated by polynomials of $\K[x_i,i\in E_1]$ but not by polynomials of $\K[x_i, i\in E_1\setminus\{i_1\}]$.\\
Then we pick $E_2\subset \{1,\ldots,n\}\setminus\{i_1\}$ such that $\operatorname{Card}(E_2)=m$, and we fix an index $i_2\in E_2$ such that $J_{E_2}$ is generated by polynomials of $\K[x_i,i\in E_2]$ but not by polynomials of $\K[x_i, i\in E_2\setminus\{i_2\}]$.\\
We repeat this process and construct a sequence 
$$E_1,\ldots, E_{n-m+1}\subset \{1,\ldots, n\}$$
of subsets of cardinal $m$, and a sequence 
$$i_1\in E_1\setminus (\cup_{i=2}^{n-m+1} E_i),\  i_2\in E_2\setminus (\cup_{i=3}^{n-m+1} E_i),\ldots, i_{n-m+1}\in E_{n-m+1}$$
such that $J_{E_k}$ is generated by polynomials of $\K[x_i,i\in E_k]$ but not by polynomials of $\K[x_i, i\in E_k\setminus\{i_k\}]$. Hence, by Krull's principal ideal theorem we have
$$\haut\left(J_{E_1}+J_{E_2}+\ldots+J_{E_{n-m+1}}\right)\geq n-m+1.$$
Equivalently we have
$$\dim(\cap_{i=1}^{n-m+1}\pi_{E_i}^{-1}(B_{E_i}))\leq m-1.$$
Therefore, the result follows by induction.
\end{proof}

%%%%%%%%%%%%%%%%%%%%%%%%%%%%%%%%%%%%%%%%%%%%%%%%%%%%%%

\end{document}